\newtheorem{theorem}{Theorem}[section]
\newtheorem{lemma}{Lemma}[section]
\newtheorem{corollary}{Corollary}[section]
\newtheorem{definition}{Definition}
\newtheorem{question}{Question}
\newtheorem*{acknowledgement}{Acknowledgement}
\title{Some Characterisations of $ p $-adic Analytic Groups}
\author{Chaitanya Ambi\footnote{Chennai Mathematical Institute, H1, SIPCOT IT Park, Siruseri, Kelambakkam 603103, India. \newline Email: chaitanya.ambi@gmail.com }}
\begin{document}
 \maketitle
\begin{abstract}

        We give three necessary and sufficient conditions for a pro-$ p$ group to be $ p $-adic analytic. We show that a noetherian pro-$p $ group having finite \textit{chain length} has a finite rank and conversely. We further deduce that a noetherian pro-$p$ group has a finite rank precisely when it satisfies the \textit{weak} descending chain condition. Using these results, we resolve a conjecture posed by Lubotzky and Mann in the affirmative within the class of noetherian groups which are \textit{countably based}.\\

        Using these results, we answer a related conjecture about pro-$ p $ groups for the case of countably based pro-$ p $ groups. Namely, we prove that if every closed but non-open subgroup of a countably based pro-$ p $ group has finite rank, then the group is $ p $-adic analytic and conversely.
\end{abstract}
\textbf{Keywords}: Pro -$ p $ Groups, $ p $-adic Analytic Groups.\\
\textbf{MSC[2010]}: 20E18, 20E34.
\section{Introduction.}\label{intro}
\indent

    For a prime $ p $, consider a pro-$p$ group $G$. If there is an integer $r\geq 0$ such that every closed subgroup of $G$ can be generated topologically by at most $r$ elements, $G$ is said to have \textit{finite rank}. In this case, the rank of $G$, denoted by $rk(G)$, is the minimal such integer.\\

    A celebrated result by Lazard asserts that pro-$ p $ groups of finite rank are precisely the $ p $-adic analytic groups. This class of pro-$ p $ groups is, therefore, the $ p $-adic analogue of real Lie groups. Several interesting characterisations of $ p $-adic analytic groups are known (see \cite{dixon}, Interlude A, p. 97). Lubotzky and Mann \cite{lubotzky} posed the following problem which aims at yet another characterisation of $ p $-adic analytic groups.
\begin{question}\label{conj-noe}
        Does every noetherian pro-$ p $ group have finite rank?
\end{question}

Here, we call a pro-$ p $ group satisfying the ascending chain condition on its closed subgroups \textit{noetherian}. This property is equivalent to each closed subgroup being finitely generated. (By convention, we call a pro-$p  $ group \textit{finitely generated} if it contains a finitely generated dense subgroup.)

\begin{definition}\label{chain_length}
    Let $ G $ be an infinite pro-$ p $ group. A chain of closed subgroups in $ G $
    \begin{equation*}
        1=G_{0} \lneq G_{1}\lneq_{c} G_{2}\lneq_{c} \dots \quad \lneq_{c}G_{m}=G
    \end{equation*} such that $ [G_{i+1}: G_{i}] =\infty $ for each $ 0\leq i \leq m-1 $ is said to be \textit{proper}. The \textit{chain length} $ l(G)\in \mathbb{N}\cup \{\infty \} $ of $ G $ is the supremum of $ m $ over all proper chains.
\end{definition}

        As already observed in \cite{klopsch} (see the paragraph following Thm. 5.11, p. 34), we have $ l(G)\leq \dim (G) $ whenever $ G $ is $ p $-adic analytic. The first result in this article shows that the answer to Q. \ref{conj-noe} is in the affirmative whenever a noetherian pro-$p$ group has finite chain length.
\begin{theorem}\label{thm_noe}
    A noetherian pro-$ p $ group $ G $ satisfies $ rk(G)<\infty $ precisely when $l(G)<\infty$.
\end{theorem}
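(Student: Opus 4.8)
The forward implication needs no chain condition: if $rk(G)<\infty$ then $G$ is $p$-adic analytic by Lazard's theorem, so $l(G)\leq\dim(G)<\infty$ as recalled above. The substance is the converse, which I would prove by induction on $m=l(G)$ in the sharper form: \emph{a noetherian pro-$p$ group of chain length $m$ has finite rank.} The induction uses two easy properties of $l$. \emph{Monotonicity:} if $K\leq_c G$ then $l(K)\leq l(G)$ --- a proper chain in $K$ extends to one in $G$ of at least the same length, either by appending $G$ on top (when $[G:K]=\infty$) or by replacing its top term $K$ by $G$ (when $K$ is open). \emph{Superadditivity over extensions:} if $N\trianglelefteq_c G$ with $N$ and $G/N$ both infinite, then concatenating a proper chain in $N$ with the preimage in $G$ of a proper chain in $G/N$ shows $l(N)+l(G/N)\leq l(G)$. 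These will be played against the companion facts that $rk(G)\leq rk(N)+rk(G/N)$ for $N\trianglelefteq_c G$, and that $G$ has finite rank iff some (equivalently, every) open subgroup does.

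For the induction, finite groups are trivial, so let $G$ be infinite, whence $m\geq1$. As $G$ is noetherian and a product of finite normal subgroups is finite and normal, the ascending chain condition gives a largest finite normal subgroup $F$; since $l(G/F)\leq l(G)$ (a proper chain in $G/F$ pulls back to one in $G$), $G/F$ is noetherian, and $rk(G)\leq rk(F)+rk(G/F)$, it suffices to treat $G/F$, so we may assume $G$ has no nontrivial finite normal subgroup. Now split into cases. If $G$ is \emph{not} just infinite, choose $N\trianglelefteq_c G$ with $1\neq N$ and $[G:N]=\infty$; then $N$ is infinite, $N$ and $G/N$ are noetherian, and superadditivity forces $l(N),l(G/N)\leq m-1$, so the inductive hypothesis yields $rk(N),rk(G/N)<\infty$ and hence $rk(G)<\infty$. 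This reduces everything to the case of $G$ \emph{just infinite}.

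For a just infinite $G$ I would invoke the structure theory of just infinite pro-$p$ groups (Wilson's dichotomy, with the branch alternative in the sense of Grigorchuk): either $G$ is of branch type, or $G$ has an open normal subgroup isomorphic to a direct power $H^{(k)}$ of a hereditarily just infinite pro-$p$ group $H$. A branch pro-$p$ group is not noetherian, so the first alternative is out. In the second, $H^{(k)}$ is open, hence noetherian with $l(H^{(k)})\leq m$; and superadditivity gives $l(H^{(k)})\geq k\,l(H)$, so if $k\geq2$ then $l(H)<m$, the inductive hypothesis applies to $H$, and finite rank propagates back to $H^{(k)}$ and then to $G$. We are therefore reduced to the following assertion: \emph{a hereditarily just infinite noetherian pro-$p$ group of finite chain length has finite rank.} If such a group is abelian it is isomorphic to $\mathbb{Z}_p$, and we are done.

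The remaining case --- $G$ non-abelian, hereditarily just infinite, noetherian, with $l(G)<\infty$ --- is where I expect the real difficulty, and the place where the finite-chain-length hypothesis earns its keep (without it the statement would settle Question~\ref{conj-noe} outright). The plan is to show that a non-$p$-adic-analytic hereditarily just infinite pro-$p$ group must violate one of our two hypotheses: either it contains a closed subgroup that is not finitely generated --- this is what happens for the model examples, the Nottingham group and the norm-one groups of division algebras over $\mathbb{F}_p((t))$, which contain closed copies of $\prod_{\mathbb{N}}\mathbb{Z}/p$ and are thus not noetherian --- or, under noetherianity, it still contains closed subgroups $\cong\mathbb{Z}_p^{n}$ of infinite index for every $n$ (equivalently, abelian subgroups of unbounded rank), which by monotonicity forces $l(G)=\infty$. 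Ruling out every intermediate possibility, and in particular bounding the ranks of the finite subgroups of such a $G$ by a function of $l(G)$, is the crux of the argument.
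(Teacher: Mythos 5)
Your reductions (monotonicity and superadditivity of $l$, splitting off the largest finite normal subgroup, passing through a normal subgroup of infinite index when $G$ is not just infinite, and invoking Wilson's dichotomy to reach the hereditarily just infinite case) are plausible, though even there the claim that a branch pro-$p$ group cannot be noetherian is asserted rather than proved. The genuine gap, however, is the terminal case you yourself flag: for $G$ non-abelian, hereditarily just infinite, noetherian, with $l(G)<\infty$, you offer only a ``plan'' --- namely that such a $G$, if not $p$-adic analytic, must contain either a non-finitely-generated closed subgroup or closed abelian subgroups of unbounded rank. No structure theorem for hereditarily just infinite pro-$p$ groups delivers this dichotomy; this is exactly where the difficulty of Question~\ref{conj-noe} is concentrated, so the step you defer is the whole theorem in that case, not a routine verification. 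As written, the argument therefore does not close.

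For comparison, the paper's proof never touches just-infinite structure theory. It inducts on $l(G)$ with base case $l(G)=1$ handled by Lemma~\ref{l_one} (a finitely generated pro-$p$ group of chain length one is virtually procyclic, via Zelmanov's theorem that finitely generated torsion pro-$p$ groups are finite), and for the inductive step it uses the noetherian hypothesis to produce a closed subgroup $H$ of infinite index, maximal with the relevant properties and of chain length $l(G)-1$, to which the induction hypothesis applies. The passage from $rk(H)<\infty$ to $rk(G)<\infty$ is Lemma~\ref{lem_max}: one works inside $\Phi(G)H$, uses Lemma~\ref{RBP} (again Zelmanov) to find $x$ none of whose nonzero powers lies in $H$, and maximality forces $\bar{x}^{\mathbb{Z}_p}$ to have finite index in $\Phi(G)H/H$, so that quotient has chain length one and finite rank, whence $rk(\Phi(G)H)\leq rk(H)+rk(\Phi(G)H/H)<\infty$ and $[G:\Phi(G)H]<\infty$ finishes. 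If you want to salvage your approach, you would need an independent argument for the hereditarily just infinite case; the maximal-subgroup-plus-Zelmanov mechanism above is one way to avoid ever needing it.
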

We define an analogue of the artinian condition in order to quote the next result.
\begin{definition}\label{weak_dcc}
 A pro-$p$ group $G$ is said to satisfy the \textit{weak} descending chain condition if every proper descending chain of closed subgroups
 \begin{equation*}
    1\lneq_{c} \dots \quad \lneq_{c}G_{1}\lneq_{c}G_{0}=G
 \end{equation*}
    (with $[G_{i+1}:G_{i}]=\infty$ for all $i\geq 0$) terminates.
\end{definition}
    The following result shows how the above property is related to rank.
\begin{theorem}\label{thm_art}
    A noetherian pro-$p$ group $G$ has a finite rank precisely when it satisfies the weak descending chain condition.
\end{theorem}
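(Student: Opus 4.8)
I would prove the two implications separately; the forward one is short, while the converse reduces, via Theorem~\ref{thm_noe}, to a statement about chain length.

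\emph{Finite rank implies the weak descending chain condition.} If $rk(G)<\infty$ then $G$ is $p$-adic analytic by Lazard's theorem, of some finite dimension $d$. I would invoke the standard fact that a closed subgroup $H$ of a $p$-adic analytic group $K$ has finite index in $K$ exactly when $\dim H=\dim K$ (a closed subgroup of full dimension is open). Thus, in any proper descending chain $\cdots\lneq_{c}G_{1}\lneq_{c}G_{0}=G$ as in Definition~\ref{weak_dcc}, every $G_{i}$ is $p$-adic analytic, and $[G_{i}:G_{i+1}]=\infty$ forces $\dim G_{i+1}<\dim G_{i}$; a strictly decreasing sequence $d=\dim G_{0}>\dim G_{1}>\cdots\geq 0$ of integers has at most $d+1$ terms, so the chain terminates. (Noetherianity of $G$ is not needed here, being implied by $rk(G)<\infty$.)

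\emph{The weak descending chain condition implies finite rank.} Assume $G$ is noetherian and satisfies the weak descending chain condition; by Theorem~\ref{thm_noe} it suffices to prove $l(G)<\infty$. I would argue by contradiction: assuming $l(G)=\infty$, I aim to build an infinite proper descending chain. The key claim to establish is that every noetherian pro-$p$ group $H$ with $l(H)=\infty$ has a closed subgroup $K\lneq_{c}H$ with $[H:K]=\infty$ and $l(K)=\infty$; granting this one iterates — closed subgroups of noetherian groups are noetherian — to produce $G=K_{0}\supsetneq_{c}K_{1}\supsetneq_{c}\cdots$ with $[K_{i}:K_{i+1}]=\infty$, contradicting the hypothesis. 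To prove the claim I would use the identity
\[
    l(H)=\sup\{\,l(K)+1 : K\lneq_{c}H,\ [H:K]=\infty\,\}
\]
for every infinite closed subgroup $H$ (with the convention $l(H)=0$ for $H$ finite), where ``$\geq$'' comes from stacking $H$ on top of a chain ending at $K$ and ``$\leq$'' from taking $K$ to be the penultimate group of a chain realising $l(H)$. If the claim failed for some $H$, then every infinite-index closed subgroup of $H$ would have \emph{finite} chain length, and by the identity these chain lengths would be unbounded; since (by noetherianity) each infinite-index closed subgroup lies below one maximal among them, there would be maximal infinite-index closed subgroups $M_{j}\lneq_{c}H$ with $l(M_{j})<\infty$ but $l(M_{j})\to\infty$.

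\emph{The main obstacle} is to refute this last configuration. By Theorem~\ref{thm_noe} each such $M_{j}$ would have finite rank, hence be $p$-adic analytic with $\dim M_{j}\geq l(M_{j})\to\infty$, so $H$ would contain $p$-adic analytic subgroups of unbounded dimension, each maximal among the infinite-index closed subgroups of $H$. From this one must extract either a single infinite-index closed subgroup of $H$ of infinite chain length, or directly an infinite proper descending chain in $H$. I expect the technical heart of the proof to lie precisely here, requiring a compactness argument in the (compact) space of closed subgroups of $H$ together with structural constraints on noetherian pro-$p$ groups — in particular the fact that a finitely generated pro-$p$ group is never the closure of a strictly increasing union of proper closed subgroups. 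Once the key claim is established, the iteration above finishes the converse, and combining it with the forward implication and Theorem~\ref{thm_noe} proves the theorem.
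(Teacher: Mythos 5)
Your forward implication is correct and is exactly the paper's (finite rank gives $p$-adic analyticity, and infinite index forces a strict drop in dimension, so proper descending chains have length at most $\dim G$). The converse, however, contains a genuine gap which you yourself flag: the ``key claim'' --- that a noetherian pro-$p$ group $H$ with $l(H)=\infty$ must contain a closed subgroup $K\lneq_{c}H$ with $[H:K]=\infty$ and $l(K)=\infty$ --- is never established. Your argument stops at the configuration of subgroups $M_{j}\lneq_{c}H$, maximal among the infinite-index closed subgroups, with $l(M_{j})<\infty$ (and, as you note, unbounded), and you defer its refutation to an unspecified compactness argument. As written, this is not a proof of the theorem.

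The missing idea is precisely Lemma~\ref{lem_max}, already available in the paper's preliminaries, and a single such maximal subgroup suffices --- no compactness in the space of closed subgroups, and no use of the unboundedness of the $l(M_{j})$, is needed. Indeed, suppose every infinite-index closed subgroup of $H$ had finite chain length. Since $l(H)=\infty$, $H$ is infinite, so the trivial subgroup has infinite index and, by the noetherian condition, there is a subgroup $M$ maximal among the closed subgroups of infinite index. Then $M$ is noetherian (being closed in $H$) with $l(M)<\infty$, so $rk(M)<\infty$ by Theorem~\ref{thm_noe}; Lemma~\ref{lem_max}, applied to the pair $M\lneq_{c}H$, gives $rk(H)<\infty$, whence $l(H)\leq \dim(H)<\infty$, contradicting $l(H)=\infty$. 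This proves your key claim, after which your iteration (build an infinite proper descending chain, contradict the weak descending chain condition, conclude $l(G)<\infty$ and invoke Theorem~\ref{thm_noe}) is a complete argument. Note that this is also where the paper's own proof funnels: there, the weak descending chain condition together with the noetherian hypothesis is used to extract a maximal finite proper chain, whose penultimate term is enlarged to a subgroup maximal among those of infinite index; Theorem~\ref{thm_noe} gives that subgroup finite rank, and Lemma~\ref{lem_max} finishes. So your route is viable, but only once you import the Lemma~\ref{lem_max} step that you left open.
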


 This result, together with \ref{thm_noe} provides a partial answer to Q. \ref{conj-noe}. To obtain a stronger result, we consider another conjecture aiming at a new characterisation of $p$-adic analytic groups (given in \cite{horizons}, Problem 2, p. 411).
\begin{question}\label{conj-non-open}
    Let $ G $ be a pro-$ p $ group such that each of its closed but non-open subgroup has finite rank. Must $ G $ have finite rank?
\end{question}

        Surprisingly, the above conjecture turns out to be related to Q. \ref{conj-noe}. Specifically, we consider Q. \ref{conj-non-open} when $ G $ is countably based. Here, we call a topological group \textit{countably based} if the identity element (and hence any other element) has a countable neighbourhood basis. Equivalently, the group has only countably many finite continuous images. We prove the following.
\begin{theorem}\label{thm_non-open}
    Let $ G $ be a countably based pro-$ p $ group. If each of its closed but non-open subgroups has finite rank, then $ rk(G)<\infty $ and conversely.
\end{theorem}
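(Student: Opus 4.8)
The converse is immediate: if $rk(G)=r<\infty$ then every closed subgroup of $G$ is topologically generated by at most $r$ elements, so in particular every closed non-open subgroup of $G$ has rank at most $r$. For the forward implication my plan is to verify the two hypotheses of Theorem~\ref{thm_art} for $G$ --- that it is noetherian and that it satisfies the weak descending chain condition --- and then conclude directly.

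\emph{Step 1: $G$ is topologically finitely generated, hence noetherian.} First I would argue by contradiction. If $G$ is not finitely generated, its Frattini subgroup $\Phi(G)=\overline{G^{p}[G,G]}$ is not open and $Q:=G/\Phi(G)$ is an infinite profinite elementary abelian $p$-group, so $Q\cong\prod_{i\in I}\mathbb{F}_{p}$ for some infinite index set $I$ (the countably based hypothesis lets one take $I$ countable, although this does not seem essential). Split $I=I_{1}\sqcup I_{2}$ into two infinite halves, let $\bar{H}\le Q$ be the closed subgroup of tuples supported on $I_{1}$, and let $H$ be its full preimage in $G$. Then $H$ is closed; $[G:H]=[Q:\bar H]$ is infinite, so $H$ is non-open; and, since $H^{p}[H,H]\subseteq G^{p}[G,G]$, one has $\Phi(H)\subseteq\Phi(G)\subseteq H$, so that $H/\Phi(H)$ surjects onto the infinite group $H/\Phi(G)=\bar H$. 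Hence $H$ is not even topologically finitely generated, so $rk(H)=\infty$, contradicting the hypothesis. Therefore $G$ is finitely generated; then every open subgroup of $G$ is finitely generated (finite index), and every closed non-open subgroup is finitely generated by hypothesis, so every closed subgroup of $G$ is finitely generated, i.e.\ $G$ is noetherian.

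\emph{Step 2: $G$ satisfies the weak descending chain condition, and conclusion.} Given a proper descending chain $\dots\lneq_{c}G_{2}\lneq_{c}G_{1}\lneq_{c}G_{0}=G$ with all indices $[G_{i}:G_{i+1}]$ infinite, I would observe that $G_{1}$ has infinite index in $G$, hence is closed and non-open, hence of finite rank; by Lazard's theorem $G_{1}$ is $p$-adic analytic, of some finite dimension $d$. For each $i\ge 1$, $G_{i+1}$ is closed in $G_{i}$, itself a closed subgroup of $G_{1}$, so $G_{i+1}$ is $p$-adic analytic; and since $[G_{i}:G_{i+1}]=\infty$ the subgroup $G_{i+1}$ is non-open in $G_{i}$, so $\dim(G_{i+1})<\dim(G_{i})$. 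Thus $d>\dim(G_{2})>\dim(G_{3})>\dots\ge 0$, a strictly decreasing sequence of non-negative integers, which forces the chain to be finite (at most $d+2$ terms). So $G$ satisfies the weak descending chain condition, and Theorem~\ref{thm_art} gives $rk(G)<\infty$.

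\emph{On the strategy and the main obstacle.} The decisive choice is to route the argument through the descending condition (Theorem~\ref{thm_art}) rather than through chain length (Theorem~\ref{thm_noe}): in a descending chain the entire tail below $G_{1}$ lives inside the single finite-rank group $G_{1}$, whose finite dimension caps the length at once, whereas the penultimate terms of longer and longer \emph{ascending} proper chains need not share a common finite-rank overgroup, so trying to bound $l(G)$ directly appears to stall. I expect Step 1 to be the delicate part: one must make the identification $Q\cong\prod_{I}\mathbb{F}_{p}$ and the inclusion $\Phi(H)\subseteq\Phi(G)$ fully rigorous, and pin down exactly where the countably based hypothesis is genuinely used rather than merely convenient for choosing a countable index set. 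Everything after Step 1 is routine given Theorems~\ref{thm_noe} and~\ref{thm_art} together with the standard structure theory of $p$-adic analytic groups (closed subgroups are again $p$-adic analytic, with dimension dropping strictly at closed non-open subgroups).
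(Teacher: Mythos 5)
Your argument is correct and follows essentially the same route as the paper: show $G$ is finitely generated via the Frattini quotient (this is exactly the paper's Lemma~\ref{not_f.g.}, which you re-prove inline), deduce that $G$ is noetherian, verify the weak descending chain condition using that every term below $G$ in a proper descending chain is closed non-open and hence of finite rank (so its dimension bounds the chain), and conclude by Theorem~\ref{thm_art}. The only divergence is cosmetic: you replace the citation of the classification of countably based abelian pro-$p$ groups by the general description of profinite elementary abelian groups as products of $\mathbb{F}_{p}$, which also justifies your observation that the countably based hypothesis is used only at that point.
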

We further note that by Lemma \ref{not_f.g.}, a possible counter-example to Q. \ref{conj-non-open} could not be finitely generated or even countably based. Furthermore, Thm. \ref{thm_non-open} allows us to deduce the following.
\begin{corollary}\label{cor_final}
 Let $G$ be a noetherian pro-$p$ group. Then $G$ is countably based if and only if $rk(G)<\infty$.
\end{corollary}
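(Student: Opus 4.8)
The plan is to prove the two implications separately; the forward one is routine while the converse rests on Theorem~\ref{thm_non-open}. For $rk(G)<\infty\Rightarrow G$ countably based one uses only that $G$ is finitely generated (noetherianity is not needed here): if $rk(G)<\infty$ then $d(G)\le rk(G)<\infty$, so $G$ is topologically finitely generated; hence each $\Phi^{n}(G)$ is open, and (as every finite $p$-group has finite $p$-class) every open normal subgroup of $G$ contains some $\Phi^{n}(G)$, so that $\bigcap_{n}\Phi^{n}(G)=1$. Thus the Frattini series of $G$ is a countable neighbourhood basis of the identity and $G$ is countably based. (Equivalently, by Lazard $G$ embeds as a closed subgroup of some $\mathrm{GL}_{n}(\mathbb{Z}_{p})$, which is second countable.)

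For the converse, assume $G$ is noetherian and countably based; I want to conclude $rk(G)<\infty$. By Theorem~\ref{thm_non-open} it suffices to show that every closed, non-open subgroup of $G$ has finite rank. I would first record two stability facts for an arbitrary closed subgroup $H\le G$: closed subgroups of $H$ are closed in $G$, so $H$ inherits the ascending chain condition and is again noetherian; and $H$, as a subspace of a first-countable group, is again countably based. Moreover, if $H$ is non-open, i.e.\ $[G:H]=\infty$, then appending $G$ to the top of any proper chain in $H$ yields a proper chain in $G$ that is one step longer, so $l(H)+1\le l(G)$ in $\mathbb{N}\cup\{\infty\}$; in particular $l(H)<l(G)$ whenever $l(G)<\infty$.

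Now I would argue by contradiction. Suppose the set $\mathcal{B}$ of closed non-open subgroups of $G$ of infinite rank is non-empty, and, using that chain length lies in the well-ordered set $\mathbb{N}\cup\{\infty\}$, choose $H\in\mathcal{B}$ with $l(H)$ minimal. If $l(H)$ were finite, then applying Theorem~\ref{thm_non-open} to the noetherian, countably based, infinite-rank group $H$ would produce a closed subgroup $L\le H$ with $[H:L]=\infty$ and $rk(L)=\infty$; then $L$ is closed in $G$ with $[G:L]=\infty$, so $L\in\mathcal{B}$, while $l(L)<l(H)$ by the chain-length drop above, contradicting minimality. Hence $l(H)=\infty$; equivalently (Theorem~\ref{thm_noe}, as $H$ is noetherian) $rk(H)=\infty$, and the same argument shows every member of $\mathcal{B}$ has infinite chain length. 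Iterating the application of Theorem~\ref{thm_non-open} inside $H$ then yields an infinite strictly descending chain $H=H_{0}\gneq_{c}H_{1}\gneq_{c}H_{2}\gneq_{c}\cdots$ of closed subgroups of $G$ with $[H_{i}:H_{i+1}]=\infty$ for every $i$.

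It remains to contradict the standing hypotheses from this proper descending chain; note that on its own it contradicts neither noetherianity (which constrains \emph{ascending} chains) nor Theorem~\ref{thm_art} (which merely predicts such a chain once $rk(G)=\infty$), so the countability assumption must now be used essentially. The route I would take: fix a decreasing neighbourhood basis $G=U_{0}\ge U_{1}\ge\cdots$ of open normal subgroups with $\bigcap_{j}U_{j}=1$; for each $j$ the images of the $H_{i}$ in the finite quotient $G/U_{j}$ form a descending chain that stabilises, and since $\bigcap_{i}H_{i}$ has infinite index the index at which stabilisation occurs tends to infinity with $j$; from finitely generated subgroups realising these increasingly long finite descending chains in the quotients one then aims to build an infinite strictly \emph{ascending} chain of closed subgroups of $G$, contradicting noetherianity. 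This fusion of the descending data into an ascending chain is the heart of the matter, and is exactly where the unrestricted Question~\ref{conj-noe} resists the same attack; it is the step I expect to be the main obstacle.
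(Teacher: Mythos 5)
Your forward direction is fine, and your opening reduction is sound: by Theorem~\ref{thm_non-open} it suffices to show every closed non-open subgroup of the noetherian, countably based $G$ has finite rank, and your use of the contrapositive of Theorem~\ref{thm_non-open} (together with the fact that closed subgroups inherit noetherianity and countable base) to manufacture an infinite proper descending chain $H_0\gneq_c H_1\gneq_c\cdots$ of infinite-rank subgroups is legitimate. But the proof stops exactly where it would have to start doing work: as you yourself observe, such a chain contradicts nothing on its own --- noetherianity only constrains ascending chains, and Theorem~\ref{thm_art} positively predicts such a chain when $rk(G)=\infty$. The final paragraph, in which the descending data in finite quotients is to be ``fused'' into an infinite strictly ascending chain of closed subgroups, is not an argument but a hope: images of the $H_i$ in each finite quotient $G/U_j$ stabilise, and it is entirely unclear how increasingly long finite descending chains in the quotients would yield closed subgroups of $G$ forming a strictly \emph{ascending} chain (preimages of stabilised images are open and again descend). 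Since you explicitly flag this as the main obstacle, the proposal is an incomplete reduction, not a proof.

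For comparison, the paper closes the argument by a different mechanism that never needs to convert descending data into ascending data. Using noetherianity it chooses $H_{max}$ maximal among closed subgroups of $G$ of infinite index and finite rank, forms $H_{\infty}:=\bigcap\{K : H_{max}\lneq_c K\leq_c G\}$, and argues that if $rk(G)=\infty$ then every $K$ in this intersection has infinite rank by maximality, while every closed non-open subgroup of $H_{\infty}$ has finite rank; since $H_{\infty}$ is again noetherian and countably based, Theorem~\ref{thm_non-open} applies to $H_{\infty}$ and gives $rk(H_{\infty})<\infty$, whence $H_{\infty}=H_{max}$ by maximality, and Lemma~\ref{lem_max} then yields $rk(G)<\infty$, a contradiction. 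In other words, the countable-base hypothesis is spent only through an application of Theorem~\ref{thm_non-open} to a carefully chosen auxiliary subgroup sitting just above a maximal finite-rank subgroup, with Lemma~\ref{lem_max} doing the final lifting; nothing like your quotient-by-quotient chain construction is required, and without an argument for that step your version does not go through.
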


    This answers Q. \ref{conj-noe} within the class of countably based noetherian groups.
\section{Preliminary Results.}\label{prelim}
\indent

For a prime $ p $, we shall denote the ring of $ p $-adic integers by $ \mathbb{Z}_{p} $. If $ G $ is a pro-$ p $ group and $ g\in G $, then the $ \mathbb{Z}_{p}$-powered closed subgroup generated by $ g $ will be denoted by $ g^{\mathbb{Z}_{p}} $. The relation $ H \leq K$ will indicate that $ H $ is a subgroup of $ K $. If this containment is proper, we shall write $H\lneq K$. \\

Furthermore, the suffixes $ 'c' $ and $'o' $ to these relations will stand for 'closed subgroup' and 'open subgroup', respectively. For instance, if $H\lneq_{c}K$ and $H$ is not open, then we must have $[K:H]=\infty$ (see \cite{dixon}, Prop. 1.2(i), p. 15). Lastly, for a subset $ X\subseteq G $, we shall write $ \langle X \rangle $ for the \textit{closed} subgroup of $ G $ generated by $ X $.\\

We call a pro-$ p $ group $ G $ \textit{countably generated (resp., finitely generated) as a} $ \mathbb{Z}_{p} $\textit{-powered group} if there exists a countable (resp., finite) subset $ X\subset G  $ such that each element $ g\in G $ equals the \textit{finite} product
\begin{equation*}
    g= \prod_{1\leq i \leq n}x_{i}^{\lambda_{i}}, \quad \lambda_{i} \in \mathbb{Z}_{p}
\end{equation*}
for some $ \{x_{1}, x_{2}, \dots x_{n}\} \subseteq X $. We quote a result here; see \cite{dixon} (Thm. 3.17, p. 53) for a proof.
\begin{lemma}\label{dix}
    A pro-$ p $ group $ G $ has finite rank if and only if $ G $ is countably generated as a $ \mathbb{Z}_{p} $-powered group. In this case, $ G $ is the product of finitely many of its procyclic subgroups and hence is finitely generated as a $ \mathbb{Z}_{p} $-powered group.
\end{lemma}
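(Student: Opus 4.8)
I would establish the two implications separately; the forward one is short, and the converse is where all the difficulty lies.

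\textbf{Forward implication.} Assuming $\mathrm{rk}(G)<\infty$, I would invoke Lazard's theory to get an open uniformly powerful subgroup $U\le G$ of dimension $d=\dim U\le\mathrm{rk}(G)$. Fixing topological generators $u_{1},\dots,u_{d}$ of $U$, the homeomorphism $\mathbb{Z}_{p}^{d}\xrightarrow{\sim}U$, $(\lambda_{1},\dots,\lambda_{d})\mapsto u_{1}^{\lambda_{1}}\cdots u_{d}^{\lambda_{d}}$ (coordinates of the first kind) shows $U=u_{1}^{\mathbb{Z}_{p}}\cdots u_{d}^{\mathbb{Z}_{p}}$. Choosing coset representatives $g_{1},\dots,g_{m}$ for $G/U$ and noting $g_{j}\in g_{j}^{\mathbb{Z}_{p}}$ (the element of exponent $1$), one gets
\begin{equation*}
 G=\bigcup_{j=1}^{m}g_{j}U\ \subseteq\ g_{1}^{\mathbb{Z}_{p}}\cdots g_{m}^{\mathbb{Z}_{p}}u_{1}^{\mathbb{Z}_{p}}\cdots u_{d}^{\mathbb{Z}_{p}}\ \subseteq\ G ,
\end{equation*}
so $G$ is a product of $m+d$ of its procyclic subgroups; in particular it is finitely, hence countably, generated as a $\mathbb{Z}_{p}$-powered group. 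This simultaneously proves the last sentence of the lemma.

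\textbf{Converse — reduction by a Baire category argument.} Suppose $G$ is generated as a $\mathbb{Z}_{p}$-powered group by a countable set $X=\{x_{1},x_{2},\dots\}$. For $n,k\in\mathbb{N}$ let $Y_{n,k}\subseteq G$ be the set of all products of at most $k$ terms of the form $x_{i}^{\lambda}$ with $i\le n$ and $\lambda\in\mathbb{Z}_{p}$. Each $Y_{n,k}$ is a continuous image of a compact space, hence closed, $Y_{n,k}=Y_{n,k}^{-1}$, and $G=\bigcup_{n,k}Y_{n,k}$ by hypothesis. Since $G$ is compact Hausdorff it is a Baire space, so some $Y_{n,k}$ has non-empty interior and therefore contains a coset $gN$ of an open normal subgroup $N$; as $g\in Y_{n,k}$, this gives $N\subseteq Y_{n,k}Y_{n,k}\subseteq Y_{n,2k}$. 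Each of the finitely many coset representatives of $G/N$ is itself a finite product of powers of elements of $X$, so they all lie in a single $Y_{n_{0},m_{0}}$ with $n_{0}\ge n$; hence $G\subseteq Y_{n_{0},m_{0}}Y_{n,2k}\subseteq Y_{n_{0},m_{0}+2k}\subseteq\bigl(x_{1}^{\mathbb{Z}_{p}}\cdots x_{n_{0}}^{\mathbb{Z}_{p}}\bigr)^{m_{0}+2k}$. Thus $G$ is a product of finitely many, say $s$, of its procyclic subgroups.

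\textbf{The crux.} It then remains to prove that a pro-$p$ group $G=D_{1}\cdots D_{s}$ which is a product of $s$ procyclic subgroups $D_{i}=c_{i}^{\mathbb{Z}_{p}}$ satisfies $\mathrm{rk}(G)<\infty$, and this is the step I expect to be the real obstacle. A cheap first observation is that, since $c_{i}^{p^{k}}\in G^{p^{k}}\le P_{k+1}(G)$ (the lower $p$-series), the image of each $D_{i}$ in $G/P_{k+1}(G)$ is cyclic of order at most $p^{k}$, so $|G:P_{k+1}(G)|\le p^{ks}$ for all $k$ (and, taking $k=1$, $G$ needs at most $s$ topological generators). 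The substantive part is to upgrade this at-most-linear growth of $\log_{p}|G:P_{k+1}(G)|$ to the assertion that $G$ is $p$-adic analytic of dimension $\le s$ — equivalently, to show that the associated graded Lie algebra $\bigoplus_{k}P_{k}(G)/P_{k+1}(G)$ is finite dimensional — and this is exactly where the full strength of Lazard's correspondence must be used; it is the content of \cite{dixon} (Thm.\ 3.17 and the surrounding material), which is why the lemma is merely quoted here.
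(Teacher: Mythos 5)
This lemma is not proved in the paper at all: it is explicitly quoted from \cite{dixon} (Thm.~3.17, p.~53), so there is no internal argument to measure you against. What you prove, you prove correctly, and along the same lines as the cited source: the forward direction via an open uniform subgroup $U$ and coordinates of the first kind, padded with coset representatives, is fine; and your Baire category step is sound --- the sets $Y_{n,k}$ are compact (continuous images of $\{1,\dots,n\}^{k}\times\mathbb{Z}_{p}^{k}$), symmetric, and cover $G$, so one of them contains a coset of an open normal subgroup and $G$ becomes a product of finitely many procyclic subgroups. The one implication you do not prove --- that a pro-$p$ group which is a product of finitely many procyclic subgroups has finite rank --- is exactly the kernel of \cite{dixon}, Thm.~3.17, and you hand it back to that reference, which is precisely what the paper does for the entire lemma; so your treatment is, if anything, more detailed than the paper's, but it is not a self-contained proof.

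Two remarks on the step you defer. You are right that your ``cheap observation'' $|G:P_{k+1}(G)|\le p^{ks}$ cannot be bootstrapped into the conclusion: finite-width pro-$p$ groups such as the Nottingham group satisfy a linear bound of this type for the lower $p$-series while having infinite rank. The genuine content of the hypothesis $G=D_{1}\cdots D_{s}$ is stronger: every continuous finite quotient is a product of $s$ cyclic subgroups, hence has order at most its \emph{actual} exponent raised to the power $s$, whereas your estimate replaces the exponent of $G/P_{k+1}(G)$ by its maximal possible value $p^{k}$. On the other hand, your claim that ``the full strength of Lazard's correspondence must be used'' overstates what is needed: in \cite{dixon} this implication is established in Chapter 3 by the powerful-group machinery of Part I, before any analytic or Lie-theoretic structure enters; so a self-contained completion of your argument would go through bounding the rank of finite $p$-groups that are products of $s$ cyclic subgroups, not through analyticity.
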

\indent

In the following lemma, we call a pro-$ p $ group \textit{virtually procyclic} if it contains a procyclic subgroup of finite index.
\begin{lemma}\label{l_one}
    A finitely generated pro-$ p $ group $ G $ with $l(G)=1 $ is virtually procyclic.
\end{lemma}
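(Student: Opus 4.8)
The plan is to produce inside $G$ a closed subgroup isomorphic to $\mathbb{Z}_p$ and then to use the hypothesis $l(G)=1$ to force that subgroup to be open, whence $G$ is virtually procyclic. First I would check that $G$ contains an element of infinite order. Note that, by Definition \ref{chain_length}, the very statement $l(G)=1$ presupposes that $G$ is infinite. If every element of $G$ had finite order, then $G$ would be a torsion pro-$p$ group, hence locally finite by Zelmanov's theorem; but a topologically finitely generated, locally finite pro-$p$ group is finite (a dense finitely generated abstract subgroup would then be finite, hence closed, hence equal to $G$), contradicting the infinitude of $G$. So fix $g\in G$ of infinite order and put $H:=g^{\mathbb{Z}_p}$. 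Being procyclic, $H$ is isomorphic either to $\mathbb{Z}_p$ or to a finite cyclic $p$-group, and the latter is impossible, since a topological generator of a finite cyclic group has finite order; thus $H\cong\mathbb{Z}_p$ is an infinite closed subgroup of $G$.

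It then remains only to see that $H$ is open in $G$. If it were not, then $[G:H]=\infty$ by \cite{dixon}, Prop. 1.2(i), so that, using also $[H:1]=|H|=\infty$, the chain $1\lneq H\lneq_{c}G$ would be proper in the sense of Definition \ref{chain_length} and of length $2$, contradicting $l(G)=1$. Hence $H$ is an open procyclic subgroup of $G$, i.e.\ $G$ is virtually procyclic.

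The routine parts of this argument --- the two structural facts about procyclic subgroups and the index bookkeeping --- present no difficulty. The one genuinely nontrivial ingredient is the appeal to Zelmanov's theorem in order to exclude an infinite, topologically finitely generated torsion pro-$p$ group; this is the step I would expect to be the main obstacle, and it seems unavoidable, because $l(G)=1$ by itself does not obviously rule out such a group and is really only used afterwards, to upgrade ``$\mathbb{Z}_p$ embeds in $G$'' to ``$\mathbb{Z}_p$ embeds in $G$ with finite index''.
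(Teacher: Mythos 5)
Your proposal is correct and follows essentially the same route as the paper's proof: Zelmanov's theorem on torsion pro-$p$ groups is used to rule out the case that every $g^{\mathbb{Z}_p}$ is finite (using that $l(G)=1$ forces $G$ to be infinite), and then $l(G)=1$ is invoked to show the resulting infinite procyclic subgroup has finite index. The only cosmetic difference is that you phrase the last step via openness (Prop.\ 1.2(i) of \cite{dixon}) while the paper passes directly to the finite-index conclusion, which changes nothing of substance.
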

\begin{proof}
\indent

Each closed subgroup of the form $ g^{\mathbb{Z}_{p}}, g\in G $ is procyclic (see \cite{dixon}, Prop. 1.28, p. 30). If we had $|g^{\mathbb{Z}_{p}}|<\infty $ for all $ g\in G $, then $ G $ would be torsion. As $ G $ is finitely generated, it would follow from a result by Zelmanov (see \cite{horizons}, Chap. 1, Cor. 2.4, p. 4) that $ |G| < \infty$. This is in contradiction with our hypothesis that $l(G)=1 $.\\

Thus, $ G $ contains an infinite subgroup $ x^{\mathbb{Z}_{p}} \leq_{c} G $ for some $ x\in G $. Since $ l(G)=1 $, we must have $ [G: x^{\mathbb{Z}_{p}}] < \infty$. By \cite{dixon} (see Prop. 1.28(c), p. 30), $ G $ is virtually procyclic.
\end{proof}
\begin{lemma}\label{not_f.g.}
    Let $ G $ be a countably based pro-$ p $ group which is not finitely generated. There exists a closed but non-open subgroup $ H\leq_{c} G $ such that $ rk(H)=\infty $.
\end{lemma}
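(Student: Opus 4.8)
The plan is to build the required subgroup inside the Frattini quotient of $G$ and then pull it back. Write $\Phi(G)=\overline{G^{p}[G,G]}$ for the (closed, normal) Frattini subgroup of $G$ and let $\pi\colon G\to V:=G/\Phi(G)$ be the projection. Since $G$ is not finitely generated, $G/\Phi(G)$ is infinite, so $V$ is an infinite elementary abelian pro-$p$ group; it is again countably based, its finite continuous images being among those of $G$. Hence $V\cong\prod_{i\in\mathbb{N}}\mathbb{F}_{p}$, the unique (up to isomorphism) infinite countably based elementary abelian pro-$p$ group; if one wishes to avoid this classification, the isomorphism can be produced directly from a strictly descending chain of open subgroups of $V$ with trivial intersection, using an adapted basis.

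Next I would choose a closed subgroup $W\leq_{c}V$ that is both infinite and of infinite index in $V$. Decomposing $V\cong V'\times V''$ with $V'\cong V''\cong\prod_{\mathbb{N}}\mathbb{F}_{p}$, the subgroup $W:=V'\times 1$ does this: it is closed, infinite, and $[V:W]=|V''|=\infty$. Set $H:=\pi^{-1}(W)$. Then $H\leq_{c}G$ is closed, being the preimage of a closed set, and $[G:H]=[V:W]=\infty$, so $H$ is not open.

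It remains to prove $rk(H)=\infty$. Since a pro-$p$ group of finite rank is finitely generated (by Lemma \ref{dix}), it suffices to show $H$ is not finitely generated. The key point is that $H\leq G$ forces $H^{p}[H,H]\subseteq G^{p}[G,G]$, hence $\Phi(H)=\overline{H^{p}[H,H]}\leq\Phi(G)$, so there is a natural surjection $H/\Phi(H)\twoheadrightarrow H/\Phi(G)=W$. As $W$ is infinite, so is $H/\Phi(H)$; and a pro-$p$ group is finitely generated precisely when its Frattini quotient is finite, so $H$ is not finitely generated. Therefore $rk(H)=\infty$, as required.

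I expect the only subtlety to be the simultaneous requirements on $W$ — it must be infinite (so that an infinite quotient of $H$ survives division by $\Phi(H)\leq\Phi(G)$) yet of infinite index (so that $H$ remains non-open) — together with the elementary inclusion $\Phi(H)\leq\Phi(G)$ that transports non-finite-generation from $W$ up to $H$. Both are available because, $G$ being non-finitely-generated, $V=G/\Phi(G)$ is large enough ($\cong\prod_{\mathbb{N}}\mathbb{F}_{p}$) to contain a subgroup that is infinite and of infinite index at once.
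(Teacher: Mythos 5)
Your proof is correct and follows essentially the same route as the paper: pass to the Frattini quotient $G/\Phi(G)$, identify it as an infinite countably based elementary abelian pro-$p$ group, choose a closed subgroup that is simultaneously infinite and of infinite index, and pull it back to a closed non-open subgroup of infinite rank. Your explicit verification that $\Phi(H)\leq\Phi(G)$, so that non-finite-generation descends to $H$, is a welcome detail the paper leaves implicit, but it does not change the argument.
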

\begin{proof}
          By \cite{dixon} (Prop. 1.14, p. 24), the Frattini subgroup $ \Phi(G) \unlhd_{c} G$ is closed but not open. Consider the countably based, abelian pro-$ p $ group $ G/\Phi(G) $. Such groups are classified in \cite{kiehlmann} (see Thm. 1.4). Accordingly, $ G/\Phi(G) $ is $ p $-elementary abelian with a countably infinite number of generators. Split the set of its generators into two disjoint infinite subsets. By omitting one of these subsets, we get an infinite proper subgroup $ K \lneq_{c} G/\Phi(G)$. It corresponds to a closed subgroup $ H:=K\Phi(G)\leq_{c} G $. As the index $ [G/\Phi(G) : K]  $ is infinite, $ K $ cannot be open in $ G/\Phi(G) $. Hence, $ H$ is not open in $ G $ either. Finally, $ rk(H) $ is also infinite as $ H $ is not finitely generated.
\end{proof}
\indent

    The following Lemma is based on Zelmanov's solution of the \textit{Resticted Burnside Problem}.
\begin{lemma}\label{RBP}
    Let $G$ be a noetherian pro-$p$ group. Let $H\unlhd_{c}G$ be a closed normal subgroup such that $[G:H]=\infty$. Then there exists an element $x\in G$ such that  $ x^{k} \not \in H $ for each $k \in \mathbb{Z}\backslash \{0\}$.
\end{lemma}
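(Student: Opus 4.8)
The plan is to pass to the quotient $\bar G := G/H$ and exhibit an element of infinite order there; any preimage of such an element will be the desired $x$. Note first that the conclusion $x^{k}\notin H$ for all $k\neq 0$ is equivalent to: the image $\bar x$ of $x$ in $G/H$ satisfies $\bar x^{k}\neq 1$ for all $k\in\mathbb{Z}\setminus\{0\}$, i.e.\ $\bar x$ has infinite order. Also, since $H$ is a subgroup, $x^{k}\in H\iff x^{-k}\in H$, so it is enough to deal with positive exponents.

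First I would record the structural facts about $\bar G$. Since $G$ is noetherian, $G$ itself is finitely generated (every closed subgroup of $G$, in particular $G$, is finitely generated by hypothesis). Because $H\unlhd_{c}G$ is closed and normal, $\bar G=G/H$ is again a pro-$p$ group, and it is finitely generated: the images of a finite generating set of $G$ under the continuous surjection $G\to G/H$ generate a dense subgroup of $\bar G$. Moreover $\bar G$ is infinite, since $[G:H]=\infty$.

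Next I would invoke Zelmanov's solution of the restricted Burnside problem, exactly as in the proof of Lemma \ref{l_one}: a finitely generated torsion pro-$p$ group is finite (\cite{horizons}, Chap.\ 1, Cor.\ 2.4, p.\ 4). As $\bar G$ is finitely generated and infinite, it cannot be torsion, so there exists $\bar x\in\bar G$ with $\bar x^{k}\neq 1$ for every $k\in\mathbb{Z}\setminus\{0\}$. Choosing any $x\in G$ with image $\bar x$, the image of $x^{k}$ in $G/H$ is $\bar x^{k}\neq 1$ for $k\neq 0$, hence $x^{k}\notin H$, which is what we want.

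The only genuine ingredient here is the appeal to Zelmanov's theorem (the ``main obstacle'' is really just that this deep result is needed at all); everything else is the routine verification that $G/H$ is an infinite, finitely generated pro-$p$ group. One should just be slightly careful to confirm that noetherianity is used only to guarantee that $G$, and hence $G/H$, is finitely generated — no stronger chain condition on $G/H$ is required for this argument.
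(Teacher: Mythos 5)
Your argument is correct and is essentially the paper's proof: both reduce to the quotient $G/H$, observe it is an infinite finitely generated pro-$p$ group (finite generation coming from noetherianity of $G$), and apply Zelmanov's theorem that finitely generated torsion pro-$p$ groups are finite to conclude $G/H$ has an element of infinite order, whose preimage is the required $x$. The only difference is presentational — you argue directly, the paper by contradiction — so there is nothing further to add.
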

\begin{proof}
\indent

                If a non-zero integral power of each $y\in G$ were to lie in $H$, the quotient $G/H$ will be a torsion pro-$p$ group. Since $G$ is noetherian, $G/H$ is a finitely generated as well. The finiteness of $G/H$ follows from a result by Zelmanov \cite{horizons}(see Chap. 1, Cor. 2.4, p. 4). Hence, $[G:H] <\infty$, which is contrary to the hypothesis of the Lemma.
    \end{proof}
\indent

    The family of closed subgroups having infinite index in a given \textit{noetherian} pro-$p$ group contains a maximal closed, non-open subgroup. This observation is the basis of the following Lemma.
\begin{lemma}\label{lem_max}
Let $G$ be an infinite noetherian pro-$p$ group. Let $H$ be a closed subgroup of $G$ which is maximal among the closed subgroups of infinite index. If $rk(H)<\infty$, then $rk(G)<\infty$.
\end{lemma}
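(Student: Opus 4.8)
\textbf{Reduction to $G=\langle H,x\rangle$.} Since $G$ is noetherian it is finitely generated, say $G=\langle H,y_{1},\dots,y_{m}\rangle$; if every $y_{i}\in H$ then $G=H$ and we are done, so fix some $y_{i}\notin H$. For any $x\notin H$ the closed subgroup $\langle H,x\rangle$ properly contains $H$, hence has finite index in $G$ by maximality of $H$, hence is open in $G$. A pro-$p$ group with an open subgroup of finite rank has finite rank (for $U\leq_{o}G$ one has $rk(G)\leq rk(U)+\log_{p}[G:U]$), so it suffices to prove $rk(\langle H,y_{i}\rangle)<\infty$. Replacing $G$ by this open subgroup --- in which $H$ is still maximal among closed subgroups of infinite index, and still of finite rank --- I may assume $G=\langle H,x\rangle$ for a single $x\notin H$. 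I will also record the basic fact that for $g\in G$, $H^{g}\le H$ forces $H^{g}=H$: otherwise $H^{g}\lneq H$, and since $H^{g}\cong H$ is $p$-adic analytic of the same dimension we get $[H:H^{g}]<\infty$, whence $H^{g^{-1}}$ is a closed subgroup properly containing $H$ with $[G:H^{g^{-1}}]=\infty$, contradicting maximality. In particular every conjugate of $H$ is again maximal among closed subgroups of infinite index.

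\textbf{Reduction via the normal closure of $H$.} If $x\in N_{G}(H)$ then $H\unlhd_{c}G$, the quotient $G/H$ is procyclic, and $rk(G)\le rk(H)+1<\infty$; so assume $x\notin N_{G}(H)$. Set $M=\langle H^{x^{n}}:n\in\mathbb{Z}\rangle$. Because $G$ is noetherian the ascending chain $\langle H^{x^{n}}:|n|\le k\rangle$ stabilises, so $M=\langle H^{x^{n}}:|n|\le N\rangle$ for some $N\ge 1$; this subgroup is $x$-invariant, hence normal in $G=\langle H,x\rangle$, and $G/M$ is procyclic. By the fact above $M\gneq H$, so $[G:M]<\infty$ by maximality, $M$ is open in $G$, and it suffices to bound $rk(M)$. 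Now shrink $N$: as long as $N\ge 2$, the subgroup $\langle H^{x^{n}}:|n|\le N-1\rangle$ still properly contains $H$, hence is open in $G$ (maximality), is generated by strictly fewer conjugates of $H$, and retains $H$ as a maximal closed subgroup of infinite index. Iterating, and then peeling one further conjugate off the two outermost terms when $N=1$ (using maximality of the corresponding conjugate of $H$), I reduce to the situation
\[
(\ast)\qquad M=\langle A,B\rangle ,\quad A\neq B\ \text{conjugates of }H,\quad \text{each maximal of infinite index in }M,\quad [M:A]=\infty .
\]

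\textbf{The two-conjugate case $(\ast)$.} This is the heart of the matter. Replacing $A$ by its core in $M$ (a closed subgroup of the finite-rank group $A$, so of finite rank; if it has finite index in $M$ we are done) I may assume $A$ is core-free, i.e.\ $M$ acts faithfully and transitively on $M/A$ with point stabiliser of finite rank. I would prove $rk(M)<\infty$ by induction on $\dim H$. The base case $\dim H=0$ (so $A,B$ finite) uses Zelmanov's theorem exactly as in Lemmas~\ref{l_one} and \ref{RBP}: $M$ is infinite and finitely generated, hence not torsion, so it contains an element $w$ of infinite order, $\langle A,w^{\mathbb{Z}_{p}}\rangle$ is open, and one analyses the normal closure of $w^{\mathbb{Z}_{p}}$. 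For the inductive step I would pass to a characteristic uniform open subgroup $U\unlhd_{o}A$, apply Lemma~\ref{RBP} to a suitable normal subgroup sitting above $\langle U^{M}\rangle$ to split off a procyclic piece, and feed the quotient back into the induction.

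\textbf{Expected obstacle.} Everything up to $(\ast)$ is formal bookkeeping with the maximality hypothesis and the noetherian ascending-chain condition; the real work is $(\ast)$, which is genuinely a finiteness statement about pro-$p$ groups generated by two ``large'' subgroups and is in effect a controlled instance of Question~\ref{conj-noe}. I expect any subtlety --- and most of the length of the argument --- to be concentrated there.
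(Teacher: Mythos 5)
Your reductions (passing to the open subgroup $\langle H,x\rangle$, the observation that $H^{g}\le H$ forces $H^{g}=H$, the normal closure $M$ of $H$ under $x$, and the whittling down to two conjugates) are essentially sound bookkeeping with the maximality hypothesis, but they do not prove the lemma: the entire content has been pushed into the configuration $(\ast)$, and there you offer only a sketch. The proposed induction on $\dim H$ --- ``pass to a characteristic uniform open subgroup $U\unlhd_{o}A$, apply Lemma~\ref{RBP} to a suitable normal subgroup sitting above $\langle U^{M}\rangle$ to split off a procyclic piece, and feed the quotient back into the induction'' --- is not an argument: no candidate normal subgroup is exhibited, Lemma~\ref{RBP} requires a closed \emph{normal} subgroup of infinite index and it is not said what that subgroup is or why the resulting element helps, and nothing in the sketch explains how maximality is actually consumed. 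Note that without maximality the statement in $(\ast)$ is simply false (a pro-$p$ group generated by two procyclic subgroups can have infinite rank), so the case you have deferred is not a routine verification but is, as you yourself say, the whole difficulty; a plan that stops exactly there is a genuine gap.

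For comparison, the paper never confronts a two-conjugate configuration. It exploits that $G$, being noetherian, is finitely generated, so $\Phi(G)$ is open and normal; it then works inside the open subgroup $\Phi(G)H$, in which $H$ is still maximal among closed subgroups of infinite index. Lemma~\ref{RBP} produces an element $x\in\Phi(G)H$ no nonzero power of which lies in $H$, and maximality forces the closed subgroup generated by $H$ together with $x$ to be of finite index; from this the paper concludes that the quotient $\Phi(G)H/H$ has chain length one, hence is virtually procyclic of finite rank by Lemma~\ref{l_one}, giving $rk(\Phi(G)H)\le rk(H)+rk(\Phi(G)H/H)<\infty$ and then $rk(G)<\infty$ since $[G:\Phi(G)H]<\infty$. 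The key idea your plan is missing is this pairing of $H$ with a normal open subgroup ($\Phi(G)$), which lets maximality be applied to a single procyclic overgroup of $H$ rather than to a group generated by two finite-rank subgroups; if you want to salvage your route, you would need to supply for $(\ast)$ an argument of comparable substance, and at present there is none.
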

\begin{proof}

    Since $G$ itself is finitely generated, we have $\Phi(G)\unlhd_{o}G$ for the Frattini subgroup of $G$. Hence, the subgroup $N\leq_{c}H$ defined as
    \begin{equation*}
     N:=\Phi(G)\cap H
    \end{equation*}
    is open in $H$ (by \cite{dixon}, Pro. 1.2(v), p. 16). As $H$ is finitely generated, we must have $[H: N]<\infty$. Note also that by \cite{clement}(see Chap. 7.5, Lemma 7.12, p. 293), we have $N\unlhd_{c}G$ with the isomorphism $\Phi(G/N)\approx \Phi(G)/N$. Thus, in fact,
    \begin{equation*}
     N\unlhd_{c}\Phi(G).
    \end{equation*}
    We also have
    \begin{equation*}
     \dfrac{\Phi(G)H}{H} \approx \dfrac{\Phi(G)}{N} \approx \Phi(G/N).
    \end{equation*}
    In the above equation, $\Phi(G)H$ is, \textit{a fortiori}, a \textit{group}. Observe that both $[G:\Phi(G)]$ and $[H:N]$ are finite. But $\Phi(G)H$ is open in $G$ (and hence, $[G:\Phi(G)H]<\infty$). As $[G:H]$ is infinite, so is the group $\Phi(G)H/H$.\\

    But now, $H$ persists to be maximal in $\Phi(G)H$ with respect to the property of having infinite index and a finite rank. The noetherian pro-$p$ group $\Phi(G)H/H$ cannot contain any \textit{proper} closed subgroups of infinite index and finite rank (otherwise, $H$ will cease to be maximal in $G$).\\

    We use Lemma \ref{RBP} to obtain $x\in \Phi(G)H$ such that none of the non-zero integral powers of $ x$ lies in $H$. Setting $\bar{x}:=xH\in \Phi(G)H/H$, the argument in the preceeding paragraph shows that the closed procyclic subgroup $\bar{x}^{\mathbb{Z}_{p}}\leq_{c}\Phi(G)H/H$ cannot have infinite index. Hence,
    \begin{equation*}
              [ \Phi(G)H/H: \bar{x}^{\mathbb{Z}_{p}}]< \infty,
             \end{equation*}
    which shows that $l(\Phi(G)H/H)=1$. By Lemma \ref{l_one}, $rk(\Phi(G)H/H)=1$. Thus,
    \begin{equation*}
    rk(\Phi(G)H)\leq rk(H)+rk(\Phi(G)H/H)<\infty.
    \end{equation*}
    This completes the proof once we observe that $[G:\Phi(G)H]<\infty$.
\end{proof}
\section{Proof of Theorem \ref{thm_noe}.}
\begin{proof}
\indent

        If $rk(G)<\infty$, then $G$ is $p$-adic analytic and $l(G)\leq \dim(G)<\infty$. For the converse, we shall proceed by induction on the chain length $ l(G) $. The case when $ l(G)=1 $ has already been addressed by Lemma \ref{l_one} as all virtually procyclic groups have rank one.\\

        Next, let $ 2\leq l(G) <\infty. $ Define
        \begin{equation*}
                \mathcal{G}:=\lbrace H: H\lneq_{c} G , \quad l(H)=l(G)-1\rbrace.
               \end{equation*}
               By the definition of chain length, it is clear that $\mathcal{G}\not = \emptyset$. Since $G$ is noetherian as well, $\mathcal{G}$ contains a maximal closed subgroup $H_{max}\lneq_{c}G$ having infinite index. But the induction hypothesis is that every closed pro-$ p $ subgroup $ K \lneq_{c} G $ having
               $ 1\leq l(K)\leq l(G)-1 $ has finite rank. This applies, in particular, to $H_{max}$. This completes the proof in view of Lemma \ref{lem_max} applied to $G$.
\end{proof}
\section{Proof of Theorem \ref{thm_art}.}
\begin{proof}
\indent

If $rk(G)<\infty$, then no proper descending chain in $G$ can have a chain length exceeding $\dim(G)$. We now prove the converse implication.\\

    Assume that $G$ is an infinite noetherian group satisfying the weak descending chain condition. In other words, no ascending or descending \textit{proper} chain in $G$ can be infinite. We introduce a partial order on all finite proper chains of closed subgroup of $G$ by refinement. More precisely, given two finite proper chains $\mathcal{A},\mathcal{B}$,
\begin{align*}
 \mathcal{A}:\quad 1\lneq_{c} \dots \quad \lneq_{c}A_{m-1}\lneq_{c}A_{m}=G\\
 \qquad \text{ and }\\
\mathcal{B}:\quad 1\lneq_{c} \dots \quad \lneq_{c}B_{n-1}\lneq_{c}B_{n}=G,
  \end{align*}
we define the partial order $\preceq $ by
\begin{multline*}
\mathcal{A} \preceq \mathcal{B} \Leftrightarrow \quad
\{A_{i}: 1\leq i \leq m\} \subseteq \{B_{j}: 1\leq j\leq n\}.
\end{multline*}
Next, any chain of finite proper chains
\begin{equation*}
 \mathcal{C}_{1}\preceq \mathcal{C}_{2}\preceq \dots,
\end{equation*}
must terminate under the given hypotheses (i.e., owing to the noetherian and the weak descending chain conditions on $G$). Note that the trivial proper chain $1\lneq_{c} G$ makes the set of all finite chains non-empty. By Zorn's Lemma, we obtain a \textit{maximal} finite proper chain; call it $\mathcal{C}_{max}$.
\begin{align*}
 \mathcal{C}_{max}:\quad 1\lneq_{c} G_{1}\dots \quad \lneq_{c}G_{l-1}\lneq_{c}G_{l}=G.
\end{align*}
Next, we must have $l(G_{k})=k$ for all $1\leq k \leq (l-1)$ owing to the maximality of the chain length of $\mathcal{C}_{max}$. In particular, this implies $rk(G_{l-1}) <\infty$ because of Theorem \ref{thm_noe}.\\

We have $[G:G_{l-1}] = \infty$ (since $\mathcal{C}_{max}$ is proper). We may replace $G_{l-1}$ by a closed subgroup $H$ which is maximal among those which contain $G_{l-1}$ and have infinite index in $G$. (This is afforded by the noetherian property of $G$.) Thus, we may supplant $\mathcal{C}_{max}$ by another maximal proper chain
\begin{equation*}
 \mathcal{C}':\quad 1\lneq_{c} G_{1}\dots \quad \lneq_{c}H\lneq_{c}G_{l}=G.
\end{equation*}
The length of $\mathcal{C}'$ equals that of $\mathcal{C}_{max}$; so we get
\begin{equation*}
l(G_{l-1})= l(H)<\infty.
\end{equation*}
Again, Theorem \ref{thm_noe} forces that $rk(H)<\infty$. But now, Lemma \ref{lem_max} is applicable to $H$ in $G$ and implies that $rk(G)<\infty$. This completes the proof.
\end{proof}
\section{Proof of Theorem \ref{thm_non-open}.}
\begin{proof}
\indent

           Let $ G $ be a pro-$ p $ group satisfying the hypotheses of Thm. \ref{thm_non-open}.  In view of Lemma \ref{not_f.g.}, $ G $ must be finitely generated. It follows by \cite{dixon}(see Prop. 1.7, p. 21) that every open subgroup of $G$ is finitely generated. On the other hand, every closed but non-open subgroup is given to have finite rank. The closed subgroups are, therefore, all finitely generated pro-$p$ groups. Hence, $ G $ is noetherian.\\

           Thus, it suffices to prove that $G$ satisfies the weak descending chain condition in view of Theorem \ref{thm_art}. By \cite{dixon} (see Prop. 1.2(i), p. 15), every closed subgroup of $G$ having a finite index is also open. Hence, no proper descending chain can contain an open subgroup. Therefore, every subgroup of $G$ which preceeds it in some proper chain is closed but non-open (and hence has a finite rank).\\

           But all finite rank closed subgroups of $G$ satisfy the weak descending chain condition (as shown in Theorem \ref{thm_art}). Hence, $G$ satisfies the weak descending chain condition and Theorem \ref{thm_art} applies as claimed. Consequently, we get $rk(G)<\infty$. The converse implication follows trivially; so the proof is complete.
           \end{proof}
\section{Proof of Corollary \ref{cor_final}.}
\begin{proof}
 \indent

    Noetherian pro-$p$ groups of finite rank are virtually $p$-adic analytic and hence countably based. We prove the converse implication now.\\

    So, assume that $G$ is noetherian and countably based. Define the following set consisting of closed subgroup of $G$.
    \begin{equation*}
     \mathcal{H}:= \big \lbrace   H\lneq_{c} G : \quad [G:H]=\infty \text{ and } rk(H)<\infty.  \big \rbrace
    \end{equation*}
    As $G$ is noetherian, $\mathcal{H}$ contains a maximal closed subgroup $H_{max}$ of finite rank. Next, consider
    \begin{equation*}
     H_{\infty}:= \bigcap_{H_{max}\lneq_{c}K\leq_{c}G} K.
    \end{equation*}
    If $rk(G)=\infty,$ then each subgroup $K$ in the above intersection will have infinite rank (owing to the maximality of $H_{max}$). Moreover, each closed but non-open subgroup of $H_{\infty}$ must have finite rank. We observe that $H_{\infty}$ is closed in $G$; so $H_{\infty}$ is noetherian as well as countably based. Thus, Thm. \ref{thm_non-open} is applicable and shows that $rk(H_{\infty})<\infty$. In fact, since $H_{max}\leq_{c}H_{\infty}$, we have $H_{max}=H_{\infty}$ by the maximality of $H_{max}$ among the subgroups of finite rank.\\

    But now, Lemma \ref{lem_max} applied to $G$ yields $rk(G)<\infty$ as well. This contradiction completes the proof.
\end{proof}
\begin{acknowledgement}
    The author would like to thank Chennai Mathematical Institute for support by a post-doctoral fellowship.
\end{acknowledgement}
\bibliographystyle{plain}
\bibliography{pro-p}
\addcontentsline{toc}{chapter}{Bibliography}
\end{document}